\newtheorem{theorem}{Theorem}[section]
\newtheorem{lemma}[theorem]{Lemma}
\newtheorem{proposition}[theorem]{Proposition}
\newtheorem{cor}[theorem]{Corollary}
\theoremstyle{definition}
\newtheorem{definition}[theorem]{Definition}
\newtheorem{remark}[theorem]{Remark}
\newcounter{example}[section]
\newcommand{\zr}{{\mathbb R}}
\newcommand{\zz}{{\mathbb Z}}
\newcommand{\zm}{{\mathbb M}}
\newcommand{\fk}{\textrm{F}^+\textrm{K}}
\newcommand{\fkmath}{\textrm{\em F}^+\textrm{\em K}}
\begin{document}

\title{Discrete Morse Theory on $\Omega S^2$}
\author{Lacey Johnson}
\email{ljohnson@ut.edu}
\address{Department  of Mathematics, University of Tampa, Tampa, FL 33606}
\author{Kevin Knudson}
\email{kknudson@ufl.edu }
\address{Department  of Mathematics,  University of Florida, Gainesville, FL 32611}
\date{\today}

\begin{abstract}
A classical result in Morse theory is the determination of the homotopy type of the loop space of a manifold. In this paper, we study this result through the lens of discrete Morse theory. This requires a suitable simplicial model for the loop space. Here, we use Milnor's $\fk$ construction to model the loop space of the sphere $S^2$, describe a discrete gradient on it, and identify a collection of critical cells. We also compute the action of the boundary operator in the Morse complex on these critical cells, showing that they are potential homology generators. A careful analysis allows us to recover the calculation of the first homology of $\Omega S^2$.
\end{abstract}

\maketitle

\section{Introduction}
Suppose $M$ is a Riemannian manifold and let $p,q$ be points in $M$ (not necessarily distinct). Denote by $\Omega=\Omega(M,p,q)$ the space of piecewise smooth paths from $p$ to $q$. Given such a path $\gamma:[0,1]\to M$, the {\em energy} of $\gamma$ is defined as
$$E(\gamma) = \int_0^1 \biggl|\biggl|\frac{d\gamma}{dt}\biggr|\biggr|^2\,dt.$$ The critical points of the function $E$ are precisely the geodesics and one can define a bilinear operator (the {\em Hessian}) $E_{\ast\ast}$ on the tangent space $T\Omega_\omega$ at a critical point $\omega$. The {\em index} of this critical point is then defined to be the maximum dimension of a subspace of $T\Omega_\omega$ on which $E_{\ast\ast}$ is negative definite. Morse proved that this number is always finite and equals the number of points  that are conjugate to $\omega(0)$ along $\omega$. 

One of the most spectacular applications of Morse theory is the following. If $M$ is a complete Riemannian manifold and if $p$ and $q$ are two points that are not conjugate along any geodesic in $M$, then $\Omega(M,p,q)$ has the homotopy type of a countable CW-complex with one cell of dimension $\lambda$ for each geodesic from $p$ to $q$ of index $\lambda$. Indeed, Milnor \cite{milnor} calls this the ``Fundamental Theorem of Morse Theory." As a consequence, one deduces that the loop space $\Omega S^n$ of the $n$-sphere has the homotopy type of a CW-complex with one cell in each dimension $0$, $(n-1)$, $2(n-1)$, $3(n-1)$,... These cells correspond to the geodesics traced by great circles on $S^n$, the $k(n-1)$-cell coming from traversing the great circle $k$ times. If $n>2$ this provides an easy computation of the homology of $\Omega S^n$. The case $n=2$ requires more care, but it can be done.

Forman introduced {\em discrete Morse theory} on cell complexes in \cite{forman}. This is a combinatorial version of the classical smooth Morse theory and it has proven to be extremely useful in topology, combinatorics, homological algebra, and other fields. We will review the basics of this theory in Section \ref{sec:dmt} below, but the upshot is that given a CW-complex $X$ one can define a discrete Morse function as a real-valued map on the set of cells of $X$, denoted $f:X\to \zr$. A cell is called critical if certain combinatorial conditions hold and then one obtains an analogue of the result from smooth Morse theory: the complex $X$ has the homotopy type of a CW-complex with one cell for each critical cell of $f$. Associated to such a function is a discrete gradient vector field $V$, which is a partial matching on the set of cells of $X$. In fact, one can focus on the matching--using it one modifies the Hasse diagram of $X$, reversing an edge when a pair of cells is in the matching, and the resulting directed graph is acyclic. It therefore suffices to find such an acyclic partial matching on the cells (a {\em Morse matching}), and that is the approach we will take here. Moreover, there is a chain complex $\zm_\bullet(X,V)$ built from the critical cells of $V$, with a boundary map defined in terms of discrete gradient paths in $X$, whose homology agrees with the singular homology of $X$.

In this paper we attempt to merge these two ideas by studying discrete Morse theory on loop spaces, with a focus on the simplest nontrivial example: $\Omega S^2$. While this space has the homotopy type of a CW-complex (with one cell in each nonnegative dimension, as noted above), it does not come equipped with a cell structure. We must therefore find a simplicial model for it and for that we turn to Milnor's $\fk$ construction \cite{milnor2}. This is a simplicial version of James's reduced product of a CW-complex $X$ \cite{james}, which is homotopy equivalent to the loop space of the suspension of $X$, $\Omega \Sigma X$. Taking $K$ to be a simplicial set modeling the circle $S^1$, we obtain a simplicial model for $\Omega S^2 = \Omega\Sigma(S^1)$; we describe this in Section \ref{sec:fk}. After a discussion of the basics of discrete Morse theory on simplicial sets in Section \ref{sec:dmt}, we describe a Morse matching on our simplicial model of $\Omega S^2$ in Section \ref{sec:loopss2}. In Section \ref{sec:crit}  we describe some critical cells of the matching and  in Section \ref{sec:homology} we compute the action of the boundary map on these cells, suggesting that they are homologically significant. The dream, of course, would be to find a Morse matching on $\fk$ with a single critical cell of each dimension, but for various reasons mentioned below, that is not possible. Still, the group of critical cells we identify are a nice collection to work with.

While we focus on a particular example here, the techniques are applicable to more general loop spaces, provided one is willing to pass to a suspension. In particular, the loop spaces of higher dimensional spheres should be accessible through very similar calculations.

\subsection*{Some history} This project has its origins in conversations the second author had with Daniel Biss in 2005. After a couple months' work with little progress we both turned our attention to other things (in Daniel's case, that meant state and local politics; he is currently the mayor of Evanston, IL). The second author then suggested this as a dissertation project to the first author, and a first iteration became her doctoral thesis \cite{lj}. The pairing described in that version, which we called the ``Max Pairing" was a nice construction, but it has cycles (i.e., it is not a Morse matching). After learning of Lampret's introduction of {\em steepness pairings} \cite{lampret}, we eventually decided to revisit this project. This paper is the result.

\section{The $\fk$ construction and $\Omega S^2$}\label{sec:fk}
Suppose $K$ is a simplicial set and denote the face and degeneracy maps by $d_i:K_n\to K_{n-1}$, $s_i:K_n\to K_{n+1}$, respectively. For general details about simplicial sets we refer the reader to \cite{may}. We will need to make use of the following relation on the degeneracies: $s_is_j = s_{j+1}s_i$, $i\le j$. We assume that $K$ has a distinguished base point $b_0\in K_0$ and we set $b_n=s_0^n(b_0)\in K_n$.

\begin{definition}\label{def:fk} For each $n\ge 0$, let $\fk_n$ be the free monoid generated by the elements of $K_n$, with the single relation $b_n=1$. Then $\fk$ is the simplicial monoid whose set of $n$-simplices is the monoid $\fk_n$, with face and degeneracy operators the unique homomorphisms carrying the generators $k_n\in K_n$ to $d_ik_n$ and $s_ik_n$, respectively.
\end{definition}

\begin{remark} Milnor \cite{milnor2} also studies $\mathrm{FK}$,  the analogous simplicial object whose $n$-simplices are the free {\em group} on the elements of $K_n$. He proves that $\mathrm{FK}$ is homotopy equivalent to $\fk$, so we are free to use either one. For our purposes, the monoid is more convenient, so we will use it here. Moreover, $\mathrm{F}^+\mathrm{K}$ is the direct simplicial analogue of James's reduced product construction \cite{james}.
\end{remark}

\begin{theorem}[\cite{milnor2}, Theorem 1]\label{thm:fkloop} There is a homotopy equivalence $|\fkmath|\simeq \Omega\Sigma |K|$.
\end{theorem}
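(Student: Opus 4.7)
The plan is to reduce to James's classical theorem that, for a connected pointed CW-complex $X$, the reduced product $J(X)$ is homotopy equivalent to $\Omega\Sigma X$ via the map induced by the unit of the suspension--loop adjunction $X \to \Omega\Sigma X$. The bridge will be to identify $|\fkmath|$ with $J(|K|)$ as topological monoids, after which the theorem follows by composition.

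First, I would recognize $\fk$ as the precise simplicial analog of the reduced product. At simplicial level $n$, an element of $\fk_n$ is a class of words $[k_1 \cdots k_m]$ with each $k_i \in K_n$, modulo deletion of the basepoint simplex $b_n$. The identities $d_i b_n = b_{n-1}$ and $s_j b_n = b_{n+1}$ ensure that the coordinatewise face and degeneracy operators descend to this quotient, matching the structure of $J(|K|)$ at each level. I would then define a natural continuous monoid map $\Phi : |\fkmath| \to J(|K|)$ by sending a realized word of simplices to the corresponding word in $J(|K|)$.

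Next, I would analyze $\Phi$ through the word-length filtration $F_0 \subset F_1 \subset F_2 \subset \cdots$ on $\fk$, where $F_m$ is the simplicial subset of words of length at most $m$. The successive quotients satisfy $F_m / F_{m-1} \cong K^{\wedge m}$, the $m$-fold simplicial smash product. Using Milnor's theorem that geometric realization commutes with finite products in the category of compactly generated weakly Hausdorff spaces, one gets $|F_m / F_{m-1}| \cong |K|^{\wedge m}$, matching the filtration quotients of the James construction. By induction on $m$, I would show $\Phi$ restricts to a homotopy equivalence $|F_m| \to J_m(|K|)$: the base case $m = 1$ is a homeomorphism, and the inductive step follows from the gluing lemma applied to the pushout squares defining both filtrations, provided one verifies that the attaching inclusions are cofibrations. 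Passing to the colimit yields $|\fkmath| \simeq J(|K|)$, and composing with James's theorem completes the proof.

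The main obstacle is the topological bookkeeping in this filtration comparison: one must justify the commutation of geometric realization with the relevant products and quotients, which forces a careful choice of topological category, and one must confirm the cofibration hypotheses needed for the inductive gluing step. Connectedness of $|K|$ enters both at this stage and in James's theorem itself, which is harmless in our setting since $K$ models $S^1$.
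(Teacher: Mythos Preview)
The paper does not supply a proof of this theorem; it is quoted from Milnor \cite{milnor2} and used as a black box. Your outline is essentially Milnor's own argument: one identifies $|\fkmath|$ with the James reduced product of $|K|$ via the word-length filtration and then invokes James's theorem. Indeed, the paper later alludes to exactly this bridge when it records (Section~\ref{sec:crit}) that $|\fkmath|$ is homeomorphic to $M(\widehat{|K|},e)$, citing \cite[Lemma~4]{milnor2}. So your proposal is correct and matches the intended route; the only caveat is that your inductive gluing argument yields a homotopy equivalence at each filtration stage, whereas Milnor in fact obtains a homeomorphism---either suffices for the statement as written.
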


Now consider the simplicial set $K$ having a unique $0$-simplex $e$ and a single $1$-simplex $y$, along with all the degeneracies of these. Thus, $K_1$ has elements $y$ and $s_0e$, $K_2$ has elements $s_0y, s_1y$, and the iterated degeneracies of the $0$-simplex $e$, and so on. The geometric realization $|K|$ is homeomorphic to the circle $S^1$. It follows that $\fk$ has the homotopy type of the loop space of the suspension of $S^1$; i.e., $|\fk|\simeq\Omega S^2$. 

Let us describe the simplicial monoid $\fk$ in detail. First note that $\fk_0$ is the free monoid on the element $e$, with the relation that $e=1$; thus, it is the trivial monoid. Then $\fk_1$ is the free monoid on $y$ and $s_0e$, modulo the relation that $s_0e=1$; thus it is the free monoid on one generator $y$. The monoid $\fk_2$ then has generators $s_0y$ and $s_1y$ (the iterated degeneracies of $e$ all vanish since these maps are monoid homomorphisms and $e$ represents the identity element in $\fk_0$; we therefore use $e$ to denote the identity element in each $\fk_n$). The monoid $\fk_3$ is generated by $s_0(s_0y), s_1(s_0y), s_2(s_0y), s_0(s_1y), s_1(s_1y), s_2(s_1y)$. However, these are not all distinct because of the relations among degeneracy maps: $s_1(s_0y) = s_0(s_0y), s_2(s_0y)=s_0(s_1y), s_2(s_1y)=s_1(s_1y)$. Thus, we have only three distinct generators: $s_0(s_0y), s_0(s_1y), s_1(s_1y)$. Denote these elements by $s_{00}y, s_{01}y, s_{11}y$.

This pattern continues. Using the degeneracy relations, we discover that the monoid $\fk_n$ is free on the generators 
$$s_{00\cdots 0}y, s_{00\cdots 01}y, s_{00\cdots 011}y,\dots ,s_{011\cdots 1}y, s_{11\cdots 1}y,$$ where the string in each subscript has length $n-1$; there are $n$ generators in total. To simplify notation we set 
$$\alpha_k^{(n)} = s_{00\cdots 011\cdots 1}y,\quad 1\le k\le n$$
where there are $(k-1)$ $1$'s in the subscript. The superscript $(n)$ denotes the dimension of the simplex. Note that $\alpha_1^{(1)}=y$. Because we will need a partial order on the simplices later, we order the generators in the obvious way:
$$\alpha_1^{(n)} < \alpha_2^{(n)} < \cdots <\alpha_n^{(n)}.$$
The boundary maps of these generators are as follows:
$$
d_i\bigl(\alpha_1^{(n)}\bigr) = \begin{cases}
				\alpha_1^{(n-1)} & 0\le i\le n-1 \\
				e & i=n
				\end{cases} \qquad
d_i\bigl(\alpha_n^{(n)}\bigr) = \begin{cases}
				e & i=0 \\
				\alpha_{n-1}^{(n-1)} & 1\le i\le n
				\end{cases}$$
and for $2\le k\le n-1$,
$$d_i\bigl(\alpha_k^{(n)}\bigr) = \begin{cases}
					\alpha_k^{(n-1)} & 0\le i\le n-k \\
					\alpha_{k-1}^{(n-1)} & n-k+1\le i\le n.
					\end{cases}$$
These maps are monoid homomorphisms and so the boundary of any $n$-simplex $\alpha_{i_1}^{(n)}\alpha_{i_2}^{(n)}\cdots \alpha_{i_j}^{(n)}$ is obtained as the product of the boundaries of each $\alpha_{i_\ell}^{(n)}$. Note that, given $\alpha_i^{(n)}$, the only $(n+1)$-simplex generators that have $\alpha_i^{(n)}$ as a face are $\alpha_i^{(n+1)}$ and $\alpha_{i+1}^{(n+1)}$.

We conclude this section with a few facts about $\fk$. Note that each generator $\alpha_i^{(n)}$, for $n\ge 2$ is degenerate since it is obtained via iterated degeneracies of the $1$-simplex $y$. Thus, in the geometric realization $|\fk|$, these simplices do not appear. However, products of these elements may or may not be degenerate. For example, a power of any single generator $\bigl(\alpha_i^{(n)}\bigr)^k$ is degenerate:
$$\bigl(\alpha_i^{(n)}\bigr)^k = \bigl(s_{00\cdots 01\cdots 1}(y)\bigr)^k = s_{00\cdots 01\cdots 1}(y^k).$$
We also have the following result.

\begin{proposition}\label{prop:topgen} If an $n$-simplex $\tau$ is nondegenerate, then the maximal $n$-simplex generator $\alpha_n^{(n)}$ occurs in the word $\tau$ at least once.
\end{proposition}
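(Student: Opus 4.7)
The plan is to prove the contrapositive: if $\alpha_n^{(n)}$ does not appear in the word $\tau$, then $\tau$ must be degenerate.

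The key observation is that $\alpha_n^{(n)}$ is the unique generator of $\fk_n$ whose degeneracy subscript is ``all ones.'' Every other generator $\alpha_k^{(n)}$ with $1 \le k \le n-1$ has subscript consisting of $(n-k) \ge 1$ zeros followed by $(k-1)$ ones, so its leftmost degeneracy is $s_0$. Peeling that $s_0$ off leaves a string of $(n-k-1)$ zeros followed by $(k-1)$ ones, which is precisely the subscript of $\alpha_k^{(n-1)}$. Hence for $1 \le k \le n-1$ we have the factorization
$$\alpha_k^{(n)} = s_0\bigl(\alpha_k^{(n-1)}\bigr).$$

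Now suppose $\tau = \alpha_{i_1}^{(n)}\alpha_{i_2}^{(n)}\cdots \alpha_{i_j}^{(n)}$ is a product in which $\alpha_n^{(n)}$ does not occur, so each $i_\ell \le n-1$. By Definition \ref{def:fk} the degeneracy $s_0\colon \fk_{n-1} \to \fk_n$ is a monoid homomorphism, so I can pull $s_0$ out of the product:
$$\tau \;=\; s_0\bigl(\alpha_{i_1}^{(n-1)}\bigr)\cdots s_0\bigl(\alpha_{i_j}^{(n-1)}\bigr) \;=\; s_0\bigl(\alpha_{i_1}^{(n-1)}\cdots \alpha_{i_j}^{(n-1)}\bigr).$$
Thus $\tau$ lies in the image of $s_0$, so it is degenerate, contradicting the hypothesis.

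There is no serious obstacle here: the proof is essentially the single identity $\alpha_k^{(n)} = s_0(\alpha_k^{(n-1)})$ for $k<n$, combined with the fact that the degeneracy maps in $\fk$ are monoid homomorphisms. The only care needed is verifying the subscript convention, namely that among the $n$ generators of $\fk_n$ it is precisely $\alpha_n^{(n)}$ whose leftmost degeneracy differs from $s_0$.
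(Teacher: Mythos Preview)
Your proof is correct and follows essentially the same route as the paper: both argue the contrapositive by noting that each generator $\alpha_k^{(n)}$ with $k<n$ has an outermost $s_0$ in its degeneracy string, so that $\alpha_k^{(n)}=s_0\bigl(\alpha_k^{(n-1)}\bigr)$, and then use that $s_0$ is a monoid homomorphism to factor $\tau$ as $s_0$ of an $(n-1)$-simplex. Your version is in fact slightly more explicit about the identity $\alpha_k^{(n)}=s_0\bigl(\alpha_k^{(n-1)}\bigr)$ than the paper's, but the argument is the same.
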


\begin{proof} Suppose that $\alpha_n^{(n)}$ does not appear in $\tau$ and write
$$\tau = \alpha_{i_1}\alpha_{i_2}\cdots \alpha_{i_\ell}$$ where each $i_j <n$ (we have dropped the superscript $(n)$ for notational convenience). Since each $i_j<n$, the sequence of degeneracies that creates $\alpha_{i_j}$ begins with an initial $s_0$. Since the degeneracy maps are monoid homomorphisms, we therefore see that 
$$\tau = s_0\bigl(\alpha_{j_1}^{(n-1)}\alpha_{j_2}^{(n-1)}\cdots\alpha_{j_\ell}^{(n-1)}\bigr),$$ where $j_k$ corresponds to the string $i_k$ with the initial $0$ removed. It follows that $\tau$ is a degenerate $n$-simplex.
\end{proof}

\section{Discrete Morse theory on simplicial sets}\label{sec:dmt}
Forman's discrete Morse theory \cite{forman} is formulated on CW-complexes, but an equivalent version for simplicial sets was developed earlier by Brown \cite{brown} (this is not as well-known as it should be). Brown called his procedure a {\em collapsing scheme} and he applied it to the classifying space of a monoid $M$ equipped with a {\em complete rewriting system}. We will employ Forman's language, but we will describe it Brown's terms.

Given a simplicial set $K$, if an $n$-simplex $x$ equals $d_iy$ for some $(n+1)$-simplex $y$, then we write $x<y$. Recall that $x$ is a {\em regular face} of $y$ if $x=d_iy$ for a unique index $i$.

\begin{definition}\label{def:morsematch} Let $K$ be a simplicial set and denote the set of $n$-simplices by $K_n$. A {\em Morse matching} on $K$ consists of the following.
\begin{enumerate}
\item For each $n$, there is a partition $K_n=A_n\cup B_n\cup C_n$, and
\item a bijection $c:A_n\to B_{n+1}$, such that if $c(\sigma)=\tau$ then $\sigma = d_i(\tau)$ for exactly one index $i$; that is, $\sigma$ is a regular face of $\tau$.
\end{enumerate}
These are required to satisfy the following conditions.
\begin{enumerate}
\item (acyclicity) If $\tau = c(\sigma)$, we write $\sigma\prec\tau$. Then there are no paths of the form
$$\sigma_0\prec\tau_0 >\sigma_1\prec\tau_1>\sigma_2\prec\cdots \prec \tau_k>\sigma_0,$$ where the $\sigma_i$ are distinct and $k>0$.
\item There are no infinite paths of the form 
$$\sigma_0\prec\tau_0 >\sigma_1\prec\tau_1>\sigma_2\prec\cdots >\sigma_k\prec \tau_k>\cdots$$ with each $\sigma_i\in A_n$.
\end{enumerate}
\end{definition}

The simplices in $C_n$ are called {\em critical}. Brown calls the elements of $A_n$ {\em redundant} and the corresponding elements of $B_{n+1}$ {\em collapsible}. He then proves the following.

\begin{proposition}[\cite{brown}, Prop. 1]\label{prop:brownprop}
 If $K$ has a collapsing scheme {\em (}Morse matching{\em )} then the geometric realization $X=|K|$ has a canonical quotient CW-complex $Y$, whose cells are in one-to-one correspondence with the critical simplices of $K$. Moreover, the quotient map $q:X\to Y$ is a homotopy equivalence mapping each open critical cell of $X$ homeomorphically onto the corresponding open cell of $Y$ and it maps each collapsible $(n+1)$-cell into the $n$-skeleton of $Y$. 
\end{proposition}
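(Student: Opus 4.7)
The plan is to construct $Y$ as the outcome of an iterated (possibly transfinite) sequence of elementary collapses, one for each matched pair $(\sigma,\tau)$ with $\tau=c(\sigma)$, and to identify $q$ with the resulting composition. The model case is the classical elementary collapse: if $\sigma$ is an $n$-simplex which is a free face of an $(n+1)$-simplex $\tau$ in a CW complex $W$ --- meaning $\sigma$ is contained in no simplex of $W$ other than $\tau$ --- then the standard prism gives a strong deformation retraction of $|\tau|\cup|\sigma|$ onto the union of the remaining faces of $\tau$. The induced quotient map is a homotopy equivalence, and it sends the collapsible $(n+1)$-cell $\tau$ entirely into the $n$-skeleton of the new complex.

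The difficulty is that in our setting $\sigma$ need not be a free face of the whole of $X$, since other $(n+1)$-simplices may contain it. This is exactly where the acyclicity comes in: it lets us order the elementary collapses so that each pair becomes free in the intermediate complex just before it is collapsed. I would declare $(\sigma',\tau') \ll (\sigma,\tau)$ whenever there is a V-path $\sigma' \prec \tau' > \sigma_1 \prec \tau_1 > \cdots \prec \tau_k > \sigma$. Condition (1) of Definition \ref{def:morsematch} makes $\ll$ a strict partial order (no cycles), and condition (2), applied dimension by dimension (V-paths preserve dimension), ensures $\ll$ is well-founded. Extending to a well-order and collapsing in that order by transfinite induction, one checks that when we arrive at $(\sigma,\tau)$ every other collapsible $(n+1)$-simplex $\tau'$ that originally contained $\sigma$ has already been removed as part of a strictly earlier matched pair $(\sigma'',\tau')$, so $\sigma$ is genuinely free in the intermediate complex at precisely the moment we need it to be. (Critical $(n+1)$-simplices that contain $\sigma$ cause no trouble, since they will never be collapsed and $\sigma$ belongs to a unique matched pair.)

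Passing to the colimit yields a space $Y$ whose open cells correspond bijectively to the critical simplices of $K$. A critical simplex is never part of any matched pair and is never touched by any elementary collapse, so $q$ restricts to a homeomorphism on its open cell; a collapsible $(n+1)$-simplex is pushed into $Y^{(n)}$ by construction of its elementary collapse; and a redundant $n$-simplex is identified with pieces of the other faces of its partner, also landing in a lower skeleton.

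The main obstacle is handling the infinite case rigorously: one has to argue that the directed colimit of intermediate CW complexes is itself a CW complex and that $q$ is a genuine (not merely weak) homotopy equivalence. This follows by a compactness argument --- every compact subset of $X$ meets only finitely many simplices and so is affected by only finitely many elementary collapses, which reduces local statements to Forman's finite collapsing theorem --- together with Whitehead's theorem to promote the cellular equivalence to a genuine homotopy equivalence. This is the careful bookkeeping carried out in Brown \cite{brown}, and the role of our partial order $\ll$ is just to orchestrate it.
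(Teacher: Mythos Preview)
The paper does not give its own proof of this proposition; it is quoted verbatim from Brown \cite{brown} and used as a black box. So there is no ``paper's proof'' to compare against---any correct argument you supply would be a genuine addition.

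That said, your sketch has a real gap. Your whole mechanism is the classical elementary collapse, which requires $\sigma$ to be a \emph{free} face of $\tau$ in the intermediate complex: no other $(n{+}1)$-cell may contain $\sigma$ at the moment you collapse. Your ordering via $\ll$ does ensure that every \emph{collapsible} $(n{+}1)$-simplex $\tau'$ with $\sigma<\tau'$ has already been removed. But a redundant $n$-simplex $\sigma$ can perfectly well be a face of a \emph{critical} $(n{+}1)$-simplex $\gamma$, and $\gamma$ is never removed. Your sentence ``Critical $(n{+}1)$-simplices that contain $\sigma$ cause no trouble, since they will never be collapsed and $\sigma$ belongs to a unique matched pair'' is not a justification: the uniqueness of the matched pair says nothing about how many cofaces $\sigma$ has, and the presence of $\gamma$ means $\sigma$ is simply not free, so the classical elementary collapse you invoke does not apply.

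What actually makes the construction work is not freeness but the \emph{regular face} hypothesis: $\sigma=d_i\tau$ for a unique $i$, so the characteristic map of $\tau$ restricted to that face is an embedding, and $|\tau|$ is locally a product $|\sigma|\times[0,1]$ near $|\sigma|$. This lets you deformation retract $|\sigma|\cup|\tau|$ onto $|\partial\tau|\setminus|\sigma|$ \emph{and then redefine the attaching maps of every other cell that met $\sigma$} by composing with that retraction. This ``internal collapse'' (in Forman's language) changes the CW structure without changing the homotopy type, and it works whether or not $\sigma$ is free. Brown's proof proceeds skeleton by skeleton in exactly this spirit, pushing each redundant cell through its collapsible partner and letting the attaching maps of all higher cells (critical or not) ride along. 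If you rewrite your argument around internal collapses rather than free-face collapses, the acyclicity/well-foundedness you set up is still the right bookkeeping device, and the rest of your outline (compactness for the infinite case, Whitehead at the end) goes through.
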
 
 
 This corresponds to the classical theorems of smooth Morse theory, which assert that a manifold $M$, equipped with a Morse function $f:M\to\zr$, has the homotopy type of a CW-complex with one $i$-cell for each critical point of index $i$ for $f$.

A common tool for visualizing Morse matchings is via the {\em Hasse diagram} of $K$. This is the directed graph ${\mathcal H}_K$ whose vertices are the simplices of $K$ with a directed edge $\tau\to\sigma$, $\sigma\in K_n$ and $\tau\in K_{n+1}$, whenever $\sigma = d_i\tau$ for some $i$. Given a Morse matching, denote by $V$ the set of pairs $\{\sigma\prec\tau\}$ determined by the bijection $c$. Modify ${\mathcal H}_K$ by reversing the directed edge $\tau\to \sigma$ whenever $\{\sigma\prec\tau\}\in V$. The conditions on the matching then imply that the resulting directed graph is acyclic. Conversely, if one starts with ${\mathcal H}_K$ and reverses some edges corresponding to regular faces so that the resulting graph is acyclic, then the corresponding set of pairs determines a Morse matching on $K$. 

\section{A Morse matching on $\Omega S^2$}\label{sec:loopss2}
Our goal in this section is to describe a Morse matching on the simplicial monoid $\fk$ described in Section \ref{sec:fk}, which has the homotopy type of $\Omega S^2$. We will make use of Lampret's steepness pairing \cite{lampret} and for that we need to describe a partial order on the various $n$-simplices of $\fk$. 

Recall that we ordered the generators $\alpha_i^{(n)}$ of $\fk_n$ in the obvious way: $\alpha_i < \alpha_j$ if and only if $i<j$. The other elements of $\fk_n$ are words in these generators. Order the words of length $\ell$ lexicographically and then declare that all words of length $\ell$ are less than words of length $\ell+1$ for all $\ell\ge 1$. This is a total order on $\fk_n$. 

Now, given a simplicial set $X$, choose a total order $\le_k$ on the $k$-simplices $X_k$ and let $\le$ be the combined partial order on the vertices of the Hasse diagram ${\mathcal H}_X$. Visualize the elements of $X_k$ positioned vertically with $u$ above $v$ if and only if $u<v$, so that ${\mathcal H}_X$ has its vertices in stacks from $0$ on the left with the elements in $X_n$ to the right in increasing order of $n$.  Define the {\em steepness pairing} to be
$${\mathcal M}_{\le} = \biggl\{\sigma{\leftarrow}\tau \biggm| \textrm{$\sigma$ is a regular face of $\tau$}, \begin{array}{c}
{\forall\ \alpha\leftarrow\sigma: \alpha <\sigma} \\ 
{\forall\ \sigma\leftarrow\mu: \tau<\mu}
\end{array}\biggr\}.$$
Pictorially, these are the steepest edges connecting regular pairs of simplices. Note that there are edges connecting non-regular faces, and these must be considered when looking for steepest pairs.

\begin{proposition}[\cite{lampret}, Lemma 2.2]\label{prop:steep} Assume each $X_k$ is finite. Then ${\mathcal M}_{\le}$ is a Morse matching. Moreover, every Morse matching on $X$ is a subset of some steepness  pairing.
\end{proposition}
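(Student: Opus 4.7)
I would prove the proposition in two stages corresponding to its two assertions: first that $\mathcal{M}_{\le}$ satisfies all the requirements of Definition~\ref{def:morsematch}, then that every Morse matching arises as a subset of a steepness pairing for some choice of total orders.

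For the first stage, I need to verify that $\mathcal{M}_{\le}$ is a valid partial matching and that the two acyclicity/finiteness conditions hold. Regularity of each pair is built into the definition, so I only need to check that no simplex is used twice. Suppose two pairs $\sigma\leftarrow\tau$ and $\sigma\leftarrow\tau'$ both lie in $\mathcal{M}_{\le}$. The steepness condition on the first (the clause concerning cofaces of $\sigma$) forces $\tau<\tau'$, while the same clause applied to the second forces $\tau'<\tau$, a contradiction; a symmetric argument using the other clause rules out two pairs sharing a common coface. For acyclicity, suppose there is a closed path $\sigma_0\prec\tau_0>\sigma_1\prec\tau_1>\cdots\prec\tau_k>\sigma_0$. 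At each step $\sigma_{i+1}$ is a regular face of $\tau_i$ distinct from $\sigma_i$, and the extremality of $\sigma_i$ as the matched face of $\tau_i$ forces $\sigma_{i+1}<\sigma_i$ in the order on $X_k$. The closing edge $\tau_k>\sigma_0$ then yields $\sigma_0<\sigma_k$, contradicting the strict descent $\sigma_0>\sigma_1>\cdots>\sigma_k$. The absence of infinite paths follows from the same strict descent together with the finiteness of each $X_k$.

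For the second assertion, given an arbitrary Morse matching $M$ on $X$, I would construct total orders $\le_k$ on each $X_k$ so that $M\subseteq\mathcal{M}_{\le}$. Each pair $\sigma\prec\tau$ in $M$ imposes two order-constraints: $\sigma$ must dominate every other regular face of $\tau$ in $X_{\dim\sigma}$, and $\tau$ must be dominated by every other coface of $\sigma$ in $X_{\dim\tau}$. I claim these constraints are mutually consistent precisely because $M$ is acyclic — any cycle in the induced order-constraints on a single $X_k$ translates directly into a closed path in $M$ — so the constraints form an acyclic relation on each $X_k$ and admit a topological sort, which I can extend arbitrarily to a total order. With these orders in place, every pair of $M$ satisfies the steepness definition.

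The main obstacle I anticipate is the acyclicity argument in the forward direction, where the extremality condition only directly governs the matched edges of the path and one must propagate the control across the intervening unmatched descents. A mirror-image subtlety appears in the converse, where one must verify that no indirect cycle of order-constraints arises from the interaction of different matched pairs; both issues reduce to the same equivalence between closed paths in $M$ and cycles in the order-constraint graph.
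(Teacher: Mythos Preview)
The paper does not supply its own proof of this proposition; it is quoted directly from Lampret \cite{lampret}, with only the remark that his argument (formulated for algebraic discrete Morse theory) transfers to the simplicial-set setting. Your plan is therefore not competing with anything in the paper, and its main lines---strict descent of the chosen order along any gradient path for the forward direction, and a topological sort of the induced order-constraints for the converse---are exactly the natural ones and are correct.

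Two points in your outline deserve a little more care. First, when verifying that $\mathcal{M}_\le$ is a matching you handle two pairs sharing a common face and two pairs sharing a common coface, but you do not address the possibility of a single simplex appearing as the lower element of one steepness pair and the upper element of another; the extremality clauses do not by themselves rule this out (e.g.\ a three-term chain with one simplex in each degree), so some convention or additional argument is needed to obtain a genuine partition $A_n\cup B_n\cup C_n$. Second, in the converse the claim that ``any cycle in the induced order-constraints on a single $X_k$ translates directly into a closed path in $M$'' is true but not quite immediate, because constraints on $X_k$ come from matched pairs with $X_{k+1}$ \emph{and} from matched pairs with $X_{k-1}$. The clean way to finish is to note that the target of a constraint of the second type is matched downward, hence cannot be the source of a constraint of the first type; this forces any constraint cycle to be pure (all constraints from above, or all from below), and a pure cycle reads off directly as a closed gradient path between two adjacent dimensions.
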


Lampret actually develops steepness pairings in the context of {\em algebraic} discrete Morse theory, but the argument used to prove Proposition \ref{prop:steep} applies in this context as well. A version of steepness pairings was also developed by Fasy, et al. \cite{fasy}, where they refer to {\em left-right parents} instead of steepest edges. 

We wish to apply this idea to the set $\fk$, but there is an obvious problem: each $\fk_n$ is (countably) infinite. We can get around this as follows. Fix a word length $\ell$. Recall that we have chosen the lexicographic order on the $n$-simplices of length $\ell$ for each $n$. Here is an important fact: given an $n$-simplex of word length $\ell$, all its faces have word length at most $\ell$. That is, word length can only decrease under the boundary homomorphisms. Now if we restrict our attention to the simplices of word length $\ell$, we can build the steepness pairing as above. In other words, our matching on $\fk$ will be the steepness pairing, but modified by pairing only simplices of the same word length, ignoring edges connecting words of different lengths. Also, we leave all degenerate simplices critical; this makes good sense as these disappear in the geometric realization anyway. We call this matching the {\em restricted steepness pairing} on $\fk$. 

\begin{theorem}\label{thm:restricted} The restricted steepness pairing is a Morse matching on $\fkmath$.
\end{theorem}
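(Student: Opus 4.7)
The plan is to verify the three conditions of Definition \ref{def:morsematch} directly, exploiting the word-length stratification of $\fk$. The key structural observation, which I would state first, is that because the face maps $d_i$ are monoid homomorphisms, the word length is non-increasing under each $d_i$: if $\tau = \beta_1\cdots\beta_\ell$ then $d_i(\tau) = d_i(\beta_1)\cdots d_i(\beta_\ell)$, which is a product of at most $\ell$ letters (some factors may equal $e$ and be absorbed). Therefore every edge of the Hasse diagram ${\mathcal H}_{\fk}$ goes from a simplex of word length $\ell$ to a simplex of word length at most $\ell$. Writing ${\mathcal H}_{\fk}^{(\ell)}$ for the sub-Hasse-diagram on simplices of word length exactly $\ell$ with only same-word-length edges retained, the restricted steepness pairing is by definition the disjoint union over $\ell \ge 1$ of the steepness pairings on each ${\mathcal H}_{\fk}^{(\ell)}$ (the stratum $\ell = 0$ consists only of $e \in \fk_n$ for each $n$, each of which has no regular face of the same word length and is thus critical).

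Next I would record the finiteness observation that salvages Lampret's framework despite $\fk_n$ being infinite: for each $\ell \ge 1$ and each $n$, the set of $n$-simplices of word length exactly $\ell$ has size at most $n^\ell$, since such simplices are words of length $\ell$ in the $n$ generators $\alpha_1^{(n)}, \ldots, \alpha_n^{(n)}$. Hence every simplex in ${\mathcal H}_{\fk}^{(\ell)}$ has only finitely many regular cofaces in ${\mathcal H}_{\fk}^{(\ell)}$, and the lex order picks out a well-defined minimum, making the ``steepest'' partner of each matched $\sigma$ unique. This handles the matching/bijection condition of Definition \ref{def:morsematch}(2) within each stratum; across strata there is no interference because every simplex belongs to exactly one stratum.

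For acyclicity, I would argue that any hypothetical cycle
\[
\sigma_0 \prec \tau_0 > \sigma_1 \prec \tau_1 > \cdots \prec \tau_k > \sigma_0
\]
must lie entirely in some ${\mathcal H}_{\fk}^{(\ell)}$: matching arrows $\sigma_i \prec \tau_i$ preserve word length by construction, while face arrows $\tau_i > \sigma_{i+1}$ can only preserve or decrease it; returning to $\sigma_0$ forces word length to be constant along the cycle. Inside the finite-per-dimension stratum ${\mathcal H}_{\fk}^{(\ell)}$, Lampret's argument for Proposition \ref{prop:steep} applies verbatim: the steepness conditions force each $\tau_i$ to be the minimum coface of $\sigma_i$, and this gives a strict monotonicity along the cycle incompatible with returning to $\sigma_0$. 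The no-infinite-descending-path condition is then automatic: such a path has all $\sigma_i \in A_n$ for a fixed $n$, and the same word-length-nonincreasing argument confines it to a single stratum and hence to a finite set $\fk_n^{(\ell)}$, so some $\sigma_i$ must repeat, producing a cycle and contradicting acyclicity.

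The main obstacle is the bookkeeping around the fact that ${\mathcal H}_{\fk}^{(\ell)}$, though finite in each dimension, still has infinitely many dimensions, which is outside the hypotheses of Proposition \ref{prop:steep} as stated. The point to be made carefully is that neither a cycle nor an ``all $\sigma_i$ in $A_n$'' infinite path can escape a bounded range of dimensions, so the relevant combinatorics take place in a genuinely finite subset of ${\mathcal H}_{\fk}^{(\ell)}$ where Lampret's proof applies without modification. Once that reduction is made precise, the three conditions follow and the restricted steepness pairing is a Morse matching on $\fk$.
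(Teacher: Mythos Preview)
Your proposal is correct and follows essentially the same approach as the paper's proof: both stratify $\fk$ by word length, invoke Lampret's Proposition~\ref{prop:steep} within each stratum for acyclicity, and use the fact that word length is non-increasing along gradient paths (together with the finiteness of $\fk_n$ in each bounded word-length range) to rule out infinite paths. Your write-up is more explicit than the paper's about why the steepest partner is well-defined and why a cycle must be confined to a single stratum, but the underlying argument is the same.
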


\begin{proof} We must show that there are no cycles and no infinite paths. For the cycles, note that within a given word length, there are no cycles by Proposition \ref{prop:steep}. That is, if we have a path of $n$- and $(n+1)$-simplices all of word length $\ell$, then the steepness pairing on these has no cycles. On the other hand, if a path of $n$- and $(n+1)$-simplices ever passes to a simplex of word length less than $\ell$, then it cannot come back since the pairing is restricted to words of the same length. Thus there cannot be any cycles. Also, there can be no infinite paths for the same reason--the boundary maps can only decrease word length and so any path must eventually terminate in words of length at least 1. Since there are only finitely many words in $\fk_n$ of length $\le \ell$ for any $\ell$, these paths must be finite.
\end{proof}

\section{Critical cells of the steepness pairing}\label{sec:crit}
Denote the set of pairs induced by the steepness pairing on $\fk$ by $V$. The steepness pairing is relatively inefficient in the sense that there are a lot of critical cells. As noted above, all degenerate simplices are left critical. But there are many others. For example, for a fixed simplex dimension $n$, if the word length $\ell$ of a simplex $\tau$ is much greater than $n$, then $\tau$ might have no regular faces, nor is it a regular face of an $(n+1)$-simplex. That is, $\tau$ cannot be in a pair in $V$. 

Our goal in this section is to describe a certain collection of critical cells that are homologically important. First observe that the unique $0$-simplex $e$ is critical since it is not a regular face of the $1$-simplex $y$. Since all $n$-simplices of word length $1$,  for $n\ge 2$, are degenerate, $y$ and all the higher generators are critical as well ($y$ cannot be matched with a generator of $\fk_2$). 

The first pairings occur in word length $2$. The $1$-simplex $y^2$ has steepest edge connecting it to $\alpha_1^{(2)}\alpha_2^{(2)}$; these form a pair in $V$. This leaves the $2$-simplex $\alpha_2^{(2)}\alpha_2^{(1)}$, and we will argue below that it does not pair with a word-length $2$ $3$-simplex. It is therefore left critical.

In word length $3$, the unique $1$-simplex $y^3$ pairs with $\alpha_1^2\alpha_2$ (superscript $(2)$'s omitted for notational clarity). There are six nondegenerate $2$-simplices of word length $3$, and some pair up with $3$-simplices as follows (dimension superscripts omitted):
$$ \{\alpha_1\alpha_2^2\prec\alpha_1\alpha_2\alpha_3\}, \; \{\alpha_2^2\alpha_1\prec \alpha_2\alpha_3\alpha_1\}, \;  \{\alpha_2\alpha_1\alpha_2\prec \alpha_2\alpha_1\alpha_3\}.$$
This leaves the $2$-simplices $\alpha_1\alpha_2\alpha_1$ and $\alpha_2\alpha_1^2$ critical. Also, no word-length $3$ $2$-simplex pairs with either $3$-simplex $\alpha_3\alpha_2^2$ or $\alpha_3\alpha_2\alpha_1$, and we will argue below that these do not pair with a word-length $3$ $4$-simplex. They are  therefore both left critical.

For each $k\ge 2$ consider the following simplices:
\begin{eqnarray*}
\sigma_{k} & =  & \alpha_{k}^{(k)}\alpha_{k-1}^{(k)}\cdots\alpha_1^{(k)}\\
\tau_{k} & = & \alpha_{k}^{(k)}\alpha_{k-1}^{(k)}\cdots \alpha_3^{(k)}\bigl(\alpha_2^{(k)}\bigr)^2 
\end{eqnarray*}
Set $\tau_0=\sigma_0=e$ and $\tau_1=\sigma_1=y$. 

\begin{theorem}\label{prop:sigmatau} The simplices $\sigma_{k}$ and $\tau_{k}$ are critical in the restricted steepness pairing.
\end{theorem}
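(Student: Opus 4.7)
The proof naturally splits along the two families. For $\tau_k$ the cleanest route is to exhibit it as degenerate, after which the paper's convention that degenerate simplices are left critical finishes the argument. Starting from $\alpha_i^{(n)} = s_0^{n-i}s_1^{i-1}y$ and the cosimplicial identities $s_is_j=s_{j+1}s_i$ for $i\le j$, a direct calculation yields
\[
s_{k-1}\bigl(\alpha_i^{(k-1)}\bigr) = \alpha_{i+1}^{(k)} \qquad (1\le i\le k-1).
\]
Because $s_{k-1}$ is a monoid homomorphism, applying it to the $(k-1)$-simplex $\alpha_{k-1}^{(k-1)}\alpha_{k-2}^{(k-1)}\cdots\alpha_1^{(k-1)}\cdot\alpha_1^{(k-1)}$ shifts every index up by one and reproduces $\tau_k$ exactly. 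Hence $\tau_k$ lies in the image of $s_{k-1}$, is degenerate, and is therefore critical.

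For $\sigma_k$ the argument is combinatorial (indeed this is needed, since $\sigma_k$ is itself nondegenerate: writing $\sigma_k=s_i(\beta)$ would force $i\ne k-\ell$ for every index $\ell\in\{1,\dots,k\}$ appearing in $\sigma_k$, exhausting all admissible $i\in\{0,\dots,k-1\}$). I would rule out both halves of ``$\sigma_k$ is in a pair''. First, no pair $\sigma_k\leftarrow\tau$ exists: parameterize each word-length-$k$ $(k{+}1)$-simplex $\tau=\alpha_{i_1}^{(k+1)}\cdots\alpha_{i_k}^{(k+1)}$ containing $\sigma_k$ by the face index $j$ with $d_j\tau=\sigma_k$. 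The face-map formulas determine $i_m$ uniquely for each $m\ne j$ and leave two admissible values at $m=j$; a short comparison then shows the ``big'' choice at position $j$ produces the same $\tau$ as the ``small'' choice at position $j+1$. Consequently every such $\tau$ has $\sigma_k$ as a face at two consecutive face indices, so $\sigma_k$ is a regular face of none of them.

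Second, no pair $\rho\leftarrow\sigma_k$ exists. Since $d_0\sigma_k=d_k\sigma_k=\sigma_{k-1}$ drops to word length $k-1$, the word-length-$k$ regular faces are exactly $\rho_j:=d_j\sigma_k=\alpha_{k-1}\cdots\alpha_{k-j+1}\alpha_{k-j}^{\,2}\alpha_{k-j-1}\cdots\alpha_1$ for $j=1,\dots,k-1$, and $\rho_1$ is lexicographically the largest. The ``max regular face'' condition of the steepness matching thus identifies $\rho_1$ as the unique possible partner for $\sigma_k$. A coface enumeration for $\rho_1$---parameterizing $k$-cofaces by ``lift sets'' $L\subseteq\{1,\dots,k\}$ of positions whose generator index is raised by one, and computing the admissible $j$-range $[\max_{m\in L}T_m,\,\min_{m\notin L}T_m]$ from the thresholds $T_m=k-p_m$ determined by the word $\rho_1$---reveals that only two values of $L$ give a singleton $j$-interval: $L=\{1\}$ produces $\sigma_k$ itself, and $L=\{2\}$ produces $\tau':=\alpha_{k-1}\alpha_k\alpha_{k-2}\cdots\alpha_1$; every other lift set yields a $j$-interval of length at least two, hence a non-regular coface. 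Because $\tau'<\sigma_k$ lexicographically, the minimum regular coface of $\rho_1$ is $\tau'$, so the steepness pairing matches $\rho_1$ with $\tau'$ rather than $\sigma_k$. Combined with the first half, $\sigma_k$ participates in no pair and is critical.

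The principal obstacle is this second coface enumeration: the coincidence of thresholds $T_1=T_2$ (caused by the repeated $\alpha_{k-1}$ in $\rho_1$) is what spawns two distinct candidate partners for $\rho_1$, and one must carefully verify both that no longer lift prefix $\{1,\dots,m\}$ yields a singleton $j$-interval and that $\sigma_k$ is the \emph{larger} of the two candidates. This is finite but intricate bookkeeping, and the rest of the argument is essentially automatic once it is in place.
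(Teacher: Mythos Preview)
Your argument for $\sigma_k$ is essentially the paper's: the enumeration of word-length-$k$ cofaces and the observation that $\sigma_k$ occurs twice in each is the same, and your downward step (only the maximal regular face $\rho_1$ is a candidate partner, but $\tilde\sigma_k=\alpha_{k-1}\alpha_k\alpha_{k-2}\cdots\alpha_1<\sigma_k$ is already a coface of $\rho_1$) is a tightened version of what the paper does for every $\rho_j$. The elaborate ``lift set'' enumeration is more than is needed---one smaller coface suffices to break condition~(c) of the steepness pair---but it is not wrong.

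For $\tau_k$ you take a genuinely different route, and it works: your identity $s_{k-1}\bigl(\alpha_i^{(k-1)}\bigr)=\alpha_{i+1}^{(k)}$ is correct (each $\alpha_j^{(k)}$ lies in the image of $s_i$ precisely for $i\neq k-j$, and the indices $j\in\{2,\dots,k\}$ occurring in $\tau_k$ forbid exactly $i\in\{0,\dots,k-2\}$, leaving $i=k-1$), so $\tau_k=s_{k-1}\bigl(\alpha_{k-1}^{(k-1)}\cdots\alpha_1^{(k-1)}\alpha_1^{(k-1)}\bigr)$ is degenerate and hence critical by the paper's convention. The paper instead argues combinatorially, parallel to the $\sigma_k$ case, treating $\tau_k$ as if it were nondegenerate. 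Your route is shorter and conceptually cleaner for the statement at hand. The trade-off is that it exposes an issue elsewhere: a degenerate $\tau_k$ vanishes in the normalized chain complex and contributes no cell to $|\mathrm{F}^+\mathrm{K}|$, so its role in Section~6 as a ``homologically significant'' cycle (and in the quotient complex $Z$) becomes formal at best. That does not affect the correctness of your proof of this theorem, but it is worth flagging.
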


\begin{proof} As noted above, the only generators that have $\alpha_i^{(n)}$ as a face are $\alpha_i^{(n+1)}$ and $\alpha_{i+1}^{(n+1)}$. Also, Proposition \ref{prop:topgen} tells us that nondegenerate $n$-simplices must contain the maximal generator $\alpha_n^{(n)}$. 

Consider the $k$-simplex $\sigma_{k} = \alpha_{k}\alpha_{k-1}\cdots\alpha_1$. We first show that it is not a regular face of any word length $k$ $(k+1)$-simplex. By the observations above, any nondegenerate $(k+1)$-simplex having $\sigma_{k}$ as a face must have $\alpha_{k+1}^{(k+1)}$ as its first element (it could not begin $\alpha_{k}\alpha_{k+1}$ since $d_0$ would have word length $k-1$, and that would be the only possible face that could equal $\sigma_k$). Proceeding left to right, we may then choose either $\alpha_i^{(k+1)}$ or $\alpha_{i+1}^{(k+1)}$ as part of a product to potentially have $\sigma_{k}$ as a face. Note, however, that we may never repeat a generator in consecutive indices, as that would lead to a square of a generator in the boundary. Thus, the only possible word-length $k$ $(k+1)$-simplices having $\sigma_{k}$ as a face are the various
$$\beta_s^{(k+1)} = \alpha_{k+1}^{(k+1)}\cdots \widehat{\alpha_{s}^{(k+1)}}\cdots \alpha_1^{(k+1)}$$  for $1\le s\le k$. Computing the boundary of $\beta_s$, we find that $\sigma_{k}$ is both $d_{k-s-1}(\beta_s)$ and $d_{k-s-2}(\beta_s)$ and hence $\sigma_{k}$ is not a regular face of any word length $k$ $(k+1)$-simplex. It therefore cannot pair up as part of a restricted steepness pair. 

The $0$-th and $k$-th faces of $\sigma_{k}$ are both the element $\sigma_{k-1}$, which has word length $k-1$. The regular faces of $\sigma_{k}$ are (all superscript indices are $(k-1)$):
$$\begin{array}{c}
\alpha_{k-1}^2\alpha_{k-2}\cdots\alpha_1 \\
\alpha_{k-1}\alpha_{k-2}^2\cdots\alpha_1 \\
\vdots \\
\alpha_{k-1}\alpha_{k-2}\cdots\alpha_1^2
\end{array}$$
Each of these is the regular face of a word length $k$ $k$-simplex lower in the lexicographic order. For example, $\alpha_{k-1}^2\alpha_{k-2}\cdots\alpha_1$ is a regular face of 
$$\alpha_{k-1}\alpha_k\alpha_{k-2}\cdots\alpha_1 < \sigma_{k}.$$ It follows that none of the regular faces of $\sigma_{k}$ forms a steepest pair with $\sigma_{k}$.  Thus, $\sigma_{k}$ is critical in the reduced steepness pairing.

The argument for $\tau_{k}$ is completely similar, but it is more subtle. The element $\tau_k$ is a regular face of precisely two word-length $k$ $(k+1)$-simplices:
$$\alpha_{k+1}\alpha_k\cdots\alpha_3\alpha_2\quad\textrm{and}\quad \alpha_{k+1}\alpha_k\cdots\alpha_2\alpha_3.$$ However, each of these simplices has a regular word-length $k$ face higher in the lexicographic order than $\tau_k$ and thus $\tau_k$ cannot be part of a restricted steepness pairing with any $(k+1)$-simplex. As with the $\sigma$'s, all the regular faces of $\tau_k$ are regular faces of simplices lower in the lexicographic order than $\tau_k$ and thus cannot form a pair with $\tau_k$. 
\end{proof}

 Recall that the homology of $\Omega S^2$ is $$H_r(\Omega S^2;\zz) \cong \zz, \quad r\ge 0.$$  The critical point of index $r$ for the energy function on $\Omega S^2$ corresponds to the geodesic tracing a great circle through the base point $r$ times. Let us be more explicit about the James map in this setting. It is convenient to use the {\em reduced suspension} here. If $(X,x_0)$ is a pointed space then $$\Sigma X = X\times [0,1]/(X\times \{0\}\cup X\times\{1\} \cup \{x_0\}\times [0,1]).$$ That is, collapse the subspaces $X\times \{0\}$ and $X\times \{1\}$ to points and then collapse the segment $\{x_0\}\times [0,1]$ to a point. We may view $X$ as embedded in $\Sigma X$ as the set of points $(x,1/2)$. Now if $x\in X$, define a loop $\gamma_x$ in $\Sigma X$ by $$\gamma_x(t) = [(x,t)],$$ where $[(x,t)]$ is the class of $(x,t)$ in $\Sigma X$. Note that this is in fact a loop in $\Sigma X$ since the points $(x,0)$ and $(x,1)$ are joined in the unreduced suspension by the segment $\{x_0\}\times [0,1]$, which is then collapsed to a point. In the case of $X=S^1$, we have $\Sigma X = S^2$; see Figure \ref{fig:jamesmap} for an example of a loop $\gamma_x$ (here, we take $x_0=1\in S^1$ as the basepoint). 
 
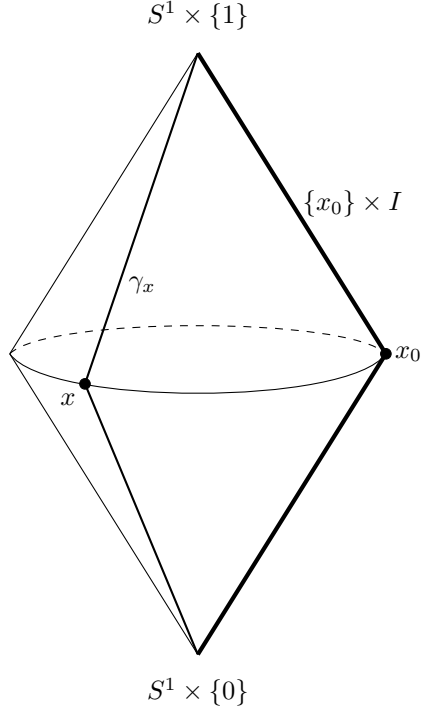
\begin{figure}
\begin{tikzpicture}
\draw[smooth,dashed] (-1,0) .. controls (-0.5,0.5) and (3.5,0.5).. (4,0);
\draw[smooth] (-1,0) .. controls (-0.5, -0.7) and (3.5,-0.7).. (4,0);
\draw (-1,0)--(1.5,4);
\draw (-1,0)--(1.5,-4);
\draw[ultra thick] (4,0)--(1.5,4) node[pos=0.5,right]{$\{x_0\}\times I$};
\draw[ultra thick] (4,0)--(1.5,-4);
\draw[thick] (0,-0.4)--(1.5,4) node[pos=0.3,right]{$\gamma_x$};
\draw[thick] (0,-0.4)--(1.5,-4);
\filldraw (0,-0.4) circle (2pt) node[anchor=north east]{$x$};
\filldraw (4,0) circle (2pt) node[anchor=west]{$x_0$};
\node at (1.5,-4.5) {$S^1\times\{0\}$};
\node at (1.5,4.5) {$S^1\times\{1\}$};
\end{tikzpicture}
\caption{\label{fig:jamesmap} A loop $\gamma_x$ in $\Sigma S^1=S^2$.}
\end{figure} 

The James map $J:X\to \Omega\Sigma X$ is then defined by $J(x)=\gamma_x$. Denote by $M(X,x_0)$ the free topological monoid on $X$. This is the quotient space
$$\coprod_{n\ge 0} X^n/\sim,$$ where $\sim$ is the equivalence relation generated by 
$$(x_1,\dots ,x_{i-1},x_0,x_{i+1},\dots ,x_n)\sim (x_1,\dots ,x_{i-1},x_{i+1},\dots ,x_n).$$ The multiplication is given by concatenation of words. We also have the space of {\em Moore loops} on $X$, $\Omega^M(X,x_0)$, defined as follows. Let $F(\zr,X)$ denote the space of all maps $\varphi:\zr\to X$ with the compact open topology. Let $\Omega^M(X,x_0)\subseteq F(\zr,X)\times [0,\infty)$ be the subspace of all pairs $(\varphi, r)$ such that $\varphi(0)=x_0$ and for which $\varphi(t)=x_0$ for $t\ge r$. The loop space $\Omega(X,x_0)$ is the subspace of pairs of the form $(\varphi,1)$ with $\varphi(t)=x_0$ for $t\ge 1$. It is straightforward to prove that $\Omega(X,x_0)$ is a deformation retract of $\Omega^M(X,x_0)$ (\cite{carlmilg}, Proposition 5.1.1).   Composing the map $J$ with the inclusion into $\Omega^M(\Sigma X,x_0)$ we obtain a map, still denoted $J$, but it does not carry $x_0$ to the identity. We can fix this by setting $\hat{X}=X\coprod [0,1]/\simeq$, where $\simeq$ is generated by $1\simeq x_0$ and then defining an extension $\hat{J}$ of $J$ to $\hat{X}$ by $\hat{J}(s) = (\varphi_{x_0},s)$, where $0\le s\le 1$ and $\varphi_{x_0}$ is the constant map with value $x_0$. Note that if $X$ is a CW-complex then $X$ and $\hat{X}$ are based homotopy equivalent. The map $\hat{J}$ is a pointed map if we take $0$ as the basepoint for $\hat{X}$. The map $\hat{J}$ then induces a monoid homomorphism 
$$\overline{J}:M(\hat{X},0)\to \Omega^M(\Sigma X,x_0).$$ James's theorem \cite{james} is that this map is a homotopy equivalence. The multiplication in $\Omega(\Sigma X,x_0)$ is given by concatenation of paths; this is only homotopy associative, of course, but this is why we expanded to $\Omega^M$, where this is not an issue. Note that this map is {\em not} a homeomorphism. Indeed, there are many more loops in $\Sigma X$ than are contained in the image of $\overline{J}$.

The simple geodesic on $\Sigma S^1$ corresponds to the image of the point $-1\in S^1$ under the map $\overline{J}$.  The critical point of index $r$, tracing this path $r$ times, then corresponds to the image of $\underbrace{(-1,-1,\dots ,-1)}_{r\,\textrm{times}}$ under $\overline{J}$. 

Milnor notes that the realization $|\fk|$ is homeomorphic to $M(\widehat{|K|},e)$ (\cite{milnor2}, Lemma 4). In fact, the product $(k_n,k_n',k_n'',\dots) \mapsto k_nk_n'k_n''\cdots \in (\fk)_n$ maps $K\times K\times\cdots\times K$ into $\fk$, and after taking realizations, this induces the homeomorphism. So we have a diagram of spaces
$$\Omega^M(\Sigma |K|) \stackrel{\overline{J}}{\longleftarrow} M(\widehat{|K|}) \longrightarrow |\fk|$$ in which the left map is a homotopy equivalence and the right map is a homeomorphism. 

Now take $K$ to be our simplicial model for $S^1$ and consider the element $\sigma_r\in (\fk)_r$ defined above: $\sigma_r = \alpha_r\alpha_{r-1}\cdots \alpha_2\alpha_1$ (superscript $(r)$'s omitted). This corresponds under the right map to the $r$-tuple $(\alpha_r,\alpha_{r-1},\dots ,\alpha_1)\in (K\times K\times\cdots\times K)_r$. Each $\alpha_i$ is a degenerate simplex in $K$, but the product element is nondegenerate. This gives us one way to build up an $r$-cell in $M(\widehat{|K|})$, and since each $\alpha_i$ is an iterated degeneracy of the unique nondegenerate $1$-simplex $y$, this is a reasonable candidate to serve as a representative of the critical geodesic of index $r$. A similar argument can be made for $\tau_r$, or other nondegenerate $r$-cells.

In the next section, we will study the action of the boundary map in $\zm_\bullet(\fk,V)$ on the elements $\sigma_r$ and $\tau_r$.

\section{Homology considerations}\label{sec:homology}
We first describe the boundary map in $\zm_\bullet(\fk,V)$. Forman's original definition \cite{forman} is given in terms of a discrete flow operator, and he then proves that this may be computed by counting gradient paths between critical cells. The latter description is analogous to what happens in the smooth context. Counting gradient paths is impractical for us, however, so we will use the original definition.

Let $V$ be a Morse matching on a cell complex $X$ and denote by $C_\bullet(X)$ the cellular chain complex. Define an inner product $\langle\; ,\;\rangle$ on each $C_n(X)$ by assuming the cells are orthogonal to each other. Define a map $V:C_n(X)\to C_{n+1}(X)$ by
$$V(\sigma) = -\langle \partial\tau,\sigma\rangle\tau$$ if $\{\sigma\prec\tau\}$ belongs to the matching $V$. If there is no such $\tau$, set $V(\sigma)=0$. Extend $V$ linearly.   The {\em discrete flow operator} is then the map $\Phi:C_n(X)\to C_n(X)$ defined by
$$\Phi(\sigma) = \sigma + \partial V(\sigma) + V(\partial\sigma);$$ that is, $\Phi = \textrm{id}+\partial V+V\partial$. Forman proves the following facts:
\begin{enumerate}
\item $\partial\Phi = \Phi\partial$.
\item Given $\sigma\in C_n(X)$ there exists $N$ such that $\Phi^N(\sigma)=\Phi^{N+1}(\sigma)=\cdots$. Denote this stable $n$-chain by $\Phi^{\infty}(\sigma)$. 
\item Denote the $\Phi$-invariant chains by $C_n^{\Phi}(X)$. Then the complex $C_\bullet^{\Phi}(X)$ has homology isomorphic to the singular homology of $X$.
\item Denote by $\zm_\bullet(X,V)$ the chain complex spanned by the critical cells of $V$. The stabilization map $\Phi^\infty:\zm_\bullet(X,V)\to C_\bullet^\Phi(X)$ is an isomorphism.
\item The boundary map $\tilde{\partial}$ in $\zm_\bullet$ may be computed as follows. If $c\in\zm_n(X,V)$ and $\sigma$ is a critical $(n-1)$-cell, then
$$\langle\tilde{\partial}c,\sigma\rangle = \langle \partial\Phi^\infty c,\sigma\rangle = \langle \Phi^\infty\partial c,\sigma\rangle.$$
\end{enumerate}

Recall the elements $\sigma_r$ and $\tau_r$ defined above. For $r\ge 3$, set 
\begin{eqnarray*}
\tilde{\sigma}_{r} & =  &\alpha_{r-1}\alpha_{r}\alpha_{r-2}\cdots \alpha_2\alpha_1 \\
\tilde{\tau}_{r} & = & \alpha_{r-1}\alpha_{r}\alpha_{r-2}\cdots\alpha_3\alpha_2^2
\end{eqnarray*}
i.e., transpose the first two generators of $\sigma$ or $\tau$. Note that this definition also makes sense for $\sigma_2$: $\tilde{\sigma}_2=\alpha_1\alpha_2$. 

\begin{proposition}\label{prop:stablefk}  We have the following stable chains
\begin{eqnarray*}
\Phi^\infty(\sigma_{r}) & = & \sigma_{r}-\tilde{\sigma}_{r}, \quad r\ge 2 \\
\Phi^\infty(\tau_{r}) & = & \tau_{r} - \tilde{\tau}_{r}, \quad r\ge 3.
\end{eqnarray*}
\end{proposition}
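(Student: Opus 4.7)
The plan is to verify the two identities
$$\Phi(\sigma_r) = \sigma_r-\tilde{\sigma}_r \quad \textrm{and} \quad \Phi(\tilde{\sigma}_r) = 0;$$
together they give $\Phi(\sigma_r-\tilde{\sigma}_r)=\sigma_r-\tilde{\sigma}_r$, so the claimed chain is $\Phi$-stable and equals $\Phi^{\infty}(\sigma_r)$. The $\tau_r$ case will follow by an entirely parallel argument with $\tilde{\tau}_r$ in place of $\tilde{\sigma}_r$.

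Since $\sigma_r$ is critical by Theorem \ref{prop:sigmatau}, $V(\sigma_r)=0$ and $\Phi(\sigma_r)=\sigma_r+V(\partial\sigma_r)$. Using that the face maps are monoid homomorphisms, together with the explicit formulas in Section \ref{sec:fk}, the boundary computes as $d_0(\sigma_r)=d_r(\sigma_r)=\sigma_{r-1}$ and, for $1\le j\le r-1$,
$$d_j(\sigma_r)=\alpha_{r-1}\alpha_{r-2}\cdots\alpha_{r-j+1}\bigl(\alpha_{r-j}\bigr)^{2}\alpha_{r-j-1}\cdots\alpha_1.$$
The two copies of $\sigma_{r-1}$ are killed by $V$ since $\sigma_{r-1}$ is critical. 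The term $d_1(\sigma_r)=\alpha_{r-1}^{2}\alpha_{r-2}\cdots\alpha_1$ is precisely the regular face already exhibited in the proof of Theorem \ref{prop:sigmatau}; it is also a regular face of $\tilde{\sigma}_r$ at index $1$, and since $\tilde{\sigma}_r<\sigma_r$ in lex order, the steepness pairing matches it with $\tilde{\sigma}_r$. Hence $V(d_1(\sigma_r))=\tilde{\sigma}_r$, contributing $-\tilde{\sigma}_r$ to $V(\partial\sigma_r)$.

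The main technical work is showing $V(d_j(\sigma_r))=0$ for $2\le j\le r-1$. For each such $j$ I would enumerate the word-length-$r$ cofacets of $d_j(\sigma_r)$ by determining, position by position, which generators $\alpha_k^{(r)}$ can produce the required entries under a single $d_i$; isolate the regular cofacets; and for each one verify by lex-comparison that some other regular face is strictly larger than $d_j(\sigma_r)$. The key point is that the interior square $\alpha_{r-j}^{2}$ admits a resolution of the form $\alpha_{r-j}\alpha_{r-j+1}$ inside some cofacet, and that resolution produces a face lex-larger than $d_j(\sigma_r)$; hence $d_j(\sigma_r)$ is never the largest regular face of any cofacet and so $d_j(\sigma_r)\notin A_{r-1}$. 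The illustrative case $r=4$, $j=2$ already displays the pattern: the regular cofacets of $d_2(\sigma_4)=\alpha_3\alpha_2^{2}\alpha_1$ are $\alpha_4\alpha_2\alpha_3\alpha_1$ and $\sigma_4$, and in the smaller one the largest regular face is $\alpha_3\alpha_2\alpha_3\alpha_1>d_2(\sigma_4)$. Combining, $V(\partial\sigma_r)=-\tilde{\sigma}_r$ and $\Phi(\sigma_r)=\sigma_r-\tilde{\sigma}_r$.

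For stability, note that $\tilde{\sigma}_r\in B_r$ so $V(\tilde{\sigma}_r)=0$ and $\Phi(\tilde{\sigma}_r)=\tilde{\sigma}_r+V(\partial\tilde{\sigma}_r)$. A direct computation in the same style gives $d_1(\tilde{\sigma}_r)=\alpha_{r-1}^{2}\alpha_{r-2}\cdots\alpha_1$, paired with $\tilde{\sigma}_r$ and therefore producing a contribution of $-\tilde{\sigma}_r$; the remaining faces of $\tilde{\sigma}_r$ fail to be steepness-bottoms by the same lex-comparison argument and so contribute $0$ under $V$. Hence $\Phi(\tilde{\sigma}_r)=0$. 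The $\tau_r$ argument is structurally identical, with the matched face of $\tau_r$ read off from its boundary exactly as above. The main obstacle throughout is the uniform enumeration and lex-comparison in the third paragraph; the rest is bookkeeping following the pattern above.
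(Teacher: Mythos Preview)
Your proposal is correct and follows essentially the same route as the paper's own proof. Both arguments compute $\Phi(\sigma_r)=\sigma_r-\tilde{\sigma}_r$ by identifying $d_1(\sigma_r)=\alpha_{r-1}^{2}\alpha_{r-2}\cdots\alpha_1$ as the unique face of $\sigma_r$ lying in $A_{r-1}$ (paired with $\tilde{\sigma}_r$), and then verify stability by showing $\Phi(\tilde{\sigma}_r)=0$ (the paper phrases this as $\Phi^2(\sigma_r)=\sigma_r-\tilde{\sigma}_r$, which is equivalent); the treatment of the remaining faces via lex-comparison with an illustrative example mirrors the paper's appendix discussion.
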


\begin{proof} Observe that if $\mu$ is a critical cell for $V$ then $\partial V(\mu)=0$.
Consider $\sigma_{r}$ first. We have the following:
\begin{eqnarray*}
\Phi(\sigma_{r}) & = & \sigma_{r} + \partial V(\sigma_{r}) + V(\partial\sigma_{r}) \\
 & = & \sigma_{r} + 0 + V(\sigma_{r-1} - \alpha_{r-1}^2\alpha_{r-2}\cdots \alpha_1 + \alpha_{r-1}\alpha_{r-2}^2\alpha_{r-3}\cdots\alpha_1 \\
 &     &  {} - \cdots + (-1)^{r-1} \alpha_{r-1}\cdots \alpha_2\alpha_1^2 + (-1)^{r}\sigma_{r-1}). \\
\end{eqnarray*}
Note that $V(\sigma_{r-1})=0$ since all the $\sigma_r$ are left unpaired by $V$. The $(r-1)$-simplex $\alpha_{r-1}^2\alpha_{r-2}\cdots \alpha_1$ is a regular face of $\tilde{\sigma}_{r}$ and it forms a steepness pair with it. The incidence number is $-1$ and so we find 
\begin{eqnarray*}
V(\alpha_{r-1}^2\alpha_{r-2}\cdots \alpha_1) & = & -\langle \partial\tilde{\sigma}_{r},\alpha_{r-1}^2\alpha_{r-2}\cdots \alpha_1\rangle\tilde{\sigma}_{r}  \\
 & = & \tilde{\sigma}_{r}.
\end{eqnarray*}
The remaining simplices in $\partial\sigma_{r}$ are also faces of various $r$-simplices, but they do not form steepness pairs with any of them since the other faces of these $r$-simplices lie both above and below the face in question in the lexicographic order (see an example in Section \ref{sec:apdx}). Thus we have
$$\Phi(\sigma_{r}) = \sigma_{r}-\tilde{\sigma}_{r}.$$
Now we compute
\begin{eqnarray*}
\Phi^2(\sigma_{r}) & = & \Phi(\sigma_{r}) - \Phi(\tilde{\sigma}_{r}) \\
 & = & \sigma_{r} - \tilde{\sigma}_{r} - \tilde{\sigma}_{r} -\partial V(\tilde{\sigma}_{r}) - V\partial(\tilde{\sigma}_{r}) \\
 & = & \sigma_{r}-2\tilde{\sigma}_{r} -V\partial(\tilde{\sigma}_{r}).
\end{eqnarray*}
The faces of $\tilde{\sigma}_{r}$ are similar to those of $\sigma_{r}$ and the same analysis applies: the face $\alpha_{r-1}^2\alpha_{r-2}\cdots \alpha_1$ is paired with $\tilde{\sigma}_{r}$ and the remaining faces cannot form steepness pairs with any $r$-simplex by order considerations. It follows that $V\partial(\tilde{\sigma}_{r}) = -\tilde{\sigma}_{r}$ and so
$$\Phi^2(\sigma_{r}) = \sigma_{r}-\tilde{\sigma}_{r}.$$ Thus, this is the stable chain $\Phi^\infty(\sigma_{r})$.
The argument for $\tau_{r}$ is completely similar.
\end{proof}

Note also that $\Phi^\infty(\tau_1) = \Phi^\infty(y) = y$ and $\Phi^\infty(e) = e$.

\begin{proposition}\label{prop:boundsigma} The following relations hold in the Morse complex $\zm_\bullet(\fkmath,V)${\em :}
\begin{eqnarray*}
\langle \tilde{\partial}\tau_{2k+1},\sigma_{2k}\rangle & = & 0 \\
\langle \tilde{\partial}\sigma_{2k},\tau_{2k-1}\rangle & = & 0.
\end{eqnarray*}
\end{proposition}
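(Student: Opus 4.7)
The plan is to apply the formula $\langle\tilde\partial c,\sigma\rangle = \langle\partial\Phi^\infty c,\sigma\rangle$ together with Proposition \ref{prop:stablefk}. Using $\Phi^\infty\tau_{2k+1} = \tau_{2k+1} - \tilde\tau_{2k+1}$ and $\Phi^\infty\sigma_{2k} = \sigma_{2k} - \tilde\sigma_{2k}$, I reduce the two claims to showing that $\sigma_{2k}$ has coefficient zero in the ordinary boundary $\partial\tau_{2k+1} - \partial\tilde\tau_{2k+1}$ and that $\tau_{2k-1}$ has coefficient zero in $\partial\sigma_{2k} - \partial\tilde\sigma_{2k}$.

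The key step is a word-length argument. Each face operator $d_i$ is a monoid homomorphism, so it sends a word of length $\ell$ in $\fkmath$ to a word of length $\ell$ or $\ell-1$, with the decrement occurring only when at least one factor is mapped to the identity $e$. From the boundary formulas in Section \ref{sec:fk}, $d_i\alpha_m^{(n)}=e$ only when $(m,i)=(n,0)$ or $(m,i)=(1,n)$. Thus $\tau_{2k+1}$ and $\tilde\tau_{2k+1}$ each admit exactly one length-dropping face, namely $d_0$ applied to the unique occurrence of $\alpha_{2k+1}^{(2k+1)}$, while $\sigma_{2k}$ and $\tilde\sigma_{2k}$ each admit two length-dropping faces, namely $d_0$ on $\alpha_{2k}^{(2k)}$ and $d_{2k}$ on $\alpha_1^{(2k)}$. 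All other $d_i$ preserve word length.

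I would then identify these reduced-length faces explicitly. A routine computation gives $d_0\tau_{2k+1} = d_0\tilde\tau_{2k+1} = \tau_{2k}$, which is a different word from $\sigma_{2k}$, and this settles the first relation for every $k\ge 1$. For the second relation with $k\ge 2$, the length-dropping faces work out to $d_0\sigma_{2k} = d_{2k}\sigma_{2k} = \sigma_{2k-1}$, $d_0\tilde\sigma_{2k} = \sigma_{2k-1}$, and $d_{2k}\tilde\sigma_{2k} = \tilde\sigma_{2k-1}$, none of which equals $\tau_{2k-1}$. The edge case $k=1$ must be treated separately since $\sigma_1=\tau_1=y$ and the length-dropping faces of $\sigma_2$ and $\tilde\sigma_2$ all return $y$; here a direct calculation gives $\partial\sigma_2 = \partial\tilde\sigma_2 = 2y - y^2$, so the two contributions cancel.

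The main obstacle is the careful bookkeeping for the reduced-length faces, in particular verifying the identification $d_{2k}\tilde\sigma_{2k} = \tilde\sigma_{2k-1}$ using the middle-range boundary formulas for $\alpha_m^{(n)}$, and treating the edge case $k=1$ in isolation. Conceptually, however, the proof is a counting check: too few boundary operators can decrease the word length, so the target simplex simply has nowhere to come from.
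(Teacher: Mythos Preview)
Your proof is correct and follows the same approach as the paper: apply the formula $\langle\tilde\partial c,\sigma\rangle = \langle\partial\Phi^\infty c,\sigma\rangle$, invoke Proposition~\ref{prop:stablefk}, and then verify that the target simplex does not occur as a face. The paper's argument is terser---it simply asserts ``since $\sigma_{2k}$ is not a face of $\tau_{2k+1}$'' and handles the $k=1$ case by the same $2-2=0$ computation you give---whereas you make explicit the word-length bookkeeping that justifies these non-face claims, including the identification $d_{2k}\tilde\sigma_{2k}=\tilde\sigma_{2k-1}$; this added detail is all correct and makes the argument easier to follow.
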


\begin{proof} This is a direct calculation. If $k\ge 1$, then
\begin{eqnarray*}
\langle \tilde{\partial}\tau_{2k+1},\sigma_{2k}\rangle & = & \langle \partial\Phi^\infty\tau_{2k+1},\sigma_{2k}\rangle \\
  & = & \langle \tau_{2k+1}-\tilde{\tau}_{2k+1},\sigma_{2k} \rangle \\
  & = & 0
\end{eqnarray*}
since $\sigma_{2k}$ is not a face of $\tau_{2k+1}$. Similarly, if $k\ge 2$, then
\begin{eqnarray*}
\langle \tilde{\partial}\sigma_{2k},\tau_{2k-1}\rangle & = & \langle \partial\Phi^\infty\sigma_{2k},\tau_{2k-1}\rangle \\
 & = & \langle \sigma_{2k}-\tilde{\sigma}_{2k},\tau_{2k-1}\rangle \\
 & = & 0.
\end{eqnarray*}
Finally, 
\begin{eqnarray*}
\langle \tilde{\partial}\sigma_2,\tau_1\rangle & = & \langle \partial\Phi^\infty\sigma_2,y\rangle \\
  & = & \langle \sigma_2-\tilde{\sigma}_2,y\rangle \\
  & = & 2 - 2 \\
  & = & 0.
\end{eqnarray*}
Clearly, $\langle y,e\rangle = 0$.
\end{proof}

\begin{remark} The $\sigma$'s do not behave well with respect to each other under the action of $\tilde{\partial}$. Indeed, each $\sigma_r$ is a nonregular face of $\sigma_{r+1}$ with incidence number $0$ or $2$ depending on the parity of $r$, while $\sigma_r$ {\em is} a regular face of $\tilde{\sigma}_{r+1}$ with incidence number 1. Thus we have
\begin{eqnarray*}
\langle\tilde{\partial}\sigma_{2k+1},\sigma_{2k}\rangle & = & 0 - 1 = -1 \\
\langle\tilde{\partial}\sigma_{2k},\sigma_{2k-1}\rangle & = & 2 - 1 = 1.
\end{eqnarray*}
\end{remark}

\begin{remark} The $\tau$'s do behave well. Indeed, $\tau_r$ is a regular face of both $\tau_{r+1}$ and $\tilde{\tau}_{r+1}$ with incidence number $1$, and so we have, for $r\ge 3$,
\begin{eqnarray*}
\langle\tilde{\partial}\tau_{r+1},\tau_r\rangle &= & 1-1 =0.
\end{eqnarray*}
\end{remark}

Now, Proposition \ref{prop:brownprop} implies that there is a quotient map 
$$q:|\fk|\to Y$$ where $Y$ is a CW-complex having one cell for each critical cell of the restricted steepness pairing. Note that $Y$ is an infinite complex. Denote by $Z$ the quotient of $Y$ obtained by collapsing all cells except for those in $\Sigma = \{\sigma_{2k},\tau_{2k+1}\}$. Then Proposition \ref{prop:boundsigma} implies that the homology of $Z$ is $$H_r(Z) \cong \zz,\quad r\ge 0.$$ Thus, we have identified a quotient of $|\fk|$ whose homology agrees with that of $|\fk|$, and it is in this sense that we may think of the $\sigma_r$ and $\tau_r$ as being homologically significant. The quotient map does not induce this isomorphism, however, since $Z$ is simply a wedge of spheres and so its cohomology ring is not isomorphic to the cohomology ring of $\Omega S^2$. A similar analysis applies to the set ${\mathcal T}=\{e,y,\sigma_2, \tau_r (r\ge 3)\}$.

One goal in discrete Morse theory is to find an optimal matching; that is, a matching with the minimal possible number of critical cells. As noted above, this is impossible in $\fk$ since simplices of large word length relative to their dimension must be critical. What we have done here is to identify a relatively small set of simplices that form cycles in the Morse complex and that seem to be reasonable candidates for homology generators. The complete story for computing the homology of $\zm_\bullet(\fk,V)$ is much more complicated, of course. We know the answer: $H_r\cong\zz$ for all $r\ge 0$, and the above analysis suggests that various $\sigma_r$ and $\tau_r$ are plausible candidates to be generating cycles. The problem, though, is that these simplices do not form a subcomplex, and their boundaries may involve other critical simplices in $V$. And there are other critical simplices that may interact with these as well. 

Since there is a unique critical vertex and a unique critical $1$-simplex, we know that $H_0(\zm_\bullet(\fk,V))\cong \zz$. Even computing $H_1$ is a challenge, however. We have the generator $y$ for $\zm_1(\fk,V)$ and it is a cycle. There are infinitely many critical $2$-simplices, however, having $y$ as a face. Proposition \ref{prop:boundsigma} tells us that $\langle\tilde{\partial}\sigma_2,y\rangle = 0$. But we also have a pair of critical word-length $2$ $2$-simplices: $\alpha_2\alpha_1^2$ and $\alpha_1\alpha_2\alpha_1$. These two simplices have the same boundary: $y^2-y^3+y$. It is convenient to use the formula $\langle\tilde{\partial}c,y\rangle = \langle\Phi^\infty\partial c,y\rangle$ to compute the action of $\tilde{\partial}$ in these cases.
Direct calculation shows that
\begin{eqnarray*}
\Phi^\infty(y^2) & = & 2y \\
\Phi^\infty(y^3) & = & 3y \\
\Phi^\infty(y) & = & y
\end{eqnarray*} 
and then we have
\begin{eqnarray*}
\langle\tilde{\partial}\alpha_2\alpha_1^2,y\rangle & = & \langle 2y-3y+y,y\rangle \\
       & = & 0 
\end{eqnarray*}
and
\begin{eqnarray*}
\langle\tilde{\partial}\alpha_1\alpha_2\alpha_1,y\rangle & = & \langle 2y-3y+y,y\rangle \\
       & = & 0.
\end{eqnarray*}

It is not practical to enumerate all the critical $2$-simplices, but it must be the case that they all map to $0$ under the Morse boundary $\tilde{\partial}$, and in fact we can prove this as follows.

\begin{lemma}\label{lem:stableyr} For each $r\ge 1$, we have $\Phi^\infty(y^r)=ry$.
\end{lemma}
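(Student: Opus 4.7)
The plan is to proceed by induction on $r$, using a direct computation of $\Phi$ applied to $y^r$. The strategy reduces the limit to a simple recurrence $\Phi(y^r) = y^{r-1} + y$.

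First I would identify the matching pair containing $y^r$ in the restricted steepness pairing. The nondegenerate $2$-simplices of word length $r$ that are cofaces of $y^r$ are precisely the products $\beta_1\beta_2\cdots\beta_r$ with each $\beta_j \in \{\alpha_1^{(2)},\alpha_2^{(2)}\}$ that are nonconstant. Using that each $d_i$ is a monoid homomorphism together with the values $d_0\alpha_1^{(2)} = d_1\alpha_1^{(2)} = y$, $d_2\alpha_1^{(2)} = e$, $d_0\alpha_2^{(2)} = e$, $d_1\alpha_2^{(2)} = d_2\alpha_2^{(2)} = y$, one checks that $d_1$ of every such product equals $y^r$, while $d_0$ and $d_2$ yield $y^a$ and $y^b$ with $a,b$ counting occurrences of $\alpha_1^{(2)}$ and $\alpha_2^{(2)}$. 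Hence $y^r$ is a regular face of every nonconstant such product, and the lex-smallest such word is $\alpha_1^{r-1}\alpha_2^{(2)}$. Since the only face of $\alpha_1^{r-1}\alpha_2^{(2)}$ at word length $r$ is $y^r$ itself, both conditions of the steepness pairing hold automatically, and therefore $y^r \prec \alpha_1^{r-1}\alpha_2^{(2)}$ is a pair in the restricted steepness pairing.

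Next I would compute $\Phi(y^r)$ explicitly. One has $\partial(\alpha_1^{r-1}\alpha_2^{(2)}) = y^{r-1} - y^r + y$, and $\partial y^r = 0$ since both $d_0$ and $d_1$ send $y$ to $e$. Thus $V(\partial y^r) = 0$, and $V(y^r) = \alpha_1^{r-1}\alpha_2^{(2)}$ once the sign coming from the incidence $\langle \partial\tau, y^r\rangle = -1$ is unwound against the minus sign in the definition of $V$. Combining these gives
\[
\Phi(y^r) \;=\; y^r + \partial\bigl(\alpha_1^{r-1}\alpha_2^{(2)}\bigr) + 0 \;=\; y^{r-1} + y.
\]

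Finally I would close the induction. The base case $r = 1$ is immediate: $y$ is critical and $\partial y = 0$, so $\Phi(y) = y$ and $\Phi^\infty(y) = y$. For the inductive step, using that $\Phi^\infty \circ \Phi = \Phi^\infty$ (which follows from eventual stability) together with the linearity of $\Phi^\infty$,
\[
\Phi^\infty(y^r) \;=\; \Phi^\infty(y^{r-1}) + \Phi^\infty(y) \;=\; (r-1)y + y \;=\; ry,
\]
applying the inductive hypothesis to the first summand. The main obstacle will be the careful verification that the restricted steepness pairing really does produce the match $y^r \prec \alpha_1^{r-1}\alpha_2^{(2)}$: one must enumerate the nondegenerate word-length-$r$ cofaces of $y^r$, confirm that restriction to a single word length eliminates any competing pairings, and keep the signs in $V$ and $\partial$ straight throughout.
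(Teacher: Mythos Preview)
Your proposal is correct and follows essentially the same route as the paper's proof: both identify the restricted steepness pair $\{y^r\prec \alpha_1^{r-1}\alpha_2\}$, compute $\Phi(y^r)=y^{r-1}+y$ directly from $\partial(\alpha_1^{r-1}\alpha_2)=y^{r-1}-y^r+y$ together with $\partial y^r=0$, and close by induction. Your justification of the steepness pair is slightly more detailed than the paper's (which simply observes that $\alpha_1^{r-1}\alpha_2$ is the lexicographically minimal nondegenerate $2$-simplex of word length $r$), but the argument is otherwise identical.
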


\begin{proof} By induction on $r$, the case $r=1$ being clear since $\Phi(y)=y$. Assume the result holds for $r$. Note that $\{y^{r+1}\prec \alpha_1^{r}\alpha_2\}$ is a restricted steepness pair since $\alpha_1^{r}\alpha_2$ is the minimal element in the lexicographic order among the $2$-simplices of word-length $r+1$. We then compute
\begin{eqnarray*}
\Phi(y^{r+1}) & = & y^{r+1} + \partial V(y^{r+1}) + V(\partial y^{r+1}) \\
		& = & y^{r+1} + \partial \alpha_1^r\alpha_2 + 0 \\
		& = & y^{r+1} + y^r - y^{r+1} +y \\
		& = & y^r + y.
\end{eqnarray*}
Thus, by the inductive hypothesis
\begin{eqnarray*}
\Phi^\infty(y^{r+1}) & = & \Phi^\infty(y^r) + \Phi^\infty(y) \\
		&  = & ry + y \\
		& = & (r+1)y.
\end{eqnarray*}
\end{proof}

\begin{proposition}\label{prop:boundary2} The boundary map $\tilde{\partial}:\zm_2(\fkmath,V)\to \zm_1(\fkmath,V)$ is trivial.
\end{proposition}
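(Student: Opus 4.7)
The plan is to use Lemma \ref{lem:stableyr} together with the fact that each face map $d_i:\fk_2\to\fk_1$ is a monoid homomorphism. Since $\fk_1$ is the free monoid on the single generator $y$, the boundary of an arbitrary $2$-simplex is a signed sum of cells of the form $y^m$, and Lemma \ref{lem:stableyr} tells us exactly what $\Phi^\infty$ does to each. A short bookkeeping argument will then show that the $y$-coefficient of $\Phi^\infty\partial c$ vanishes for every $c$, whether or not $c$ is critical.

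First I would confirm that $\zm_1(\fk,V)$ is generated by $y$. Every element of $\fk_1$ is a word $y^r$ with $r\ge 0$, and the proof of Lemma \ref{lem:stableyr} already exhibits $\{y^r \prec \alpha_1^{r-1}\alpha_2\}$ as a restricted steepness pair for every $r\ge 2$. Combined with the paper's earlier remark that $y$ is the unique critical $1$-simplex, this reduces the proposition to verifying $\langle\tilde{\partial}c,y\rangle = \langle\Phi^\infty\partial c,y\rangle = 0$ for every critical $2$-simplex $c$.

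Next I would compute $\partial c$ in closed form. Since $\fk_2$ is the free monoid on $\alpha_1^{(2)}$ and $\alpha_2^{(2)}$, write $c=\alpha_{i_1}^{(2)}\alpha_{i_2}^{(2)}\cdots\alpha_{i_\ell}^{(2)}$ with each $i_k\in\{1,2\}$, and let $n_j$ denote the number of indices $k$ with $i_k=j$, so that $n_1+n_2=\ell$. The formulas in Section \ref{sec:fk} specialize to $d_0\alpha_1^{(2)}=y$, $d_0\alpha_2^{(2)}=e$, $d_1\alpha_j^{(2)}=y$ for $j=1,2$, and $d_2\alpha_1^{(2)}=e$, $d_2\alpha_2^{(2)}=y$. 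Because each $d_i$ is a monoid homomorphism, we conclude
\[\partial c = d_0 c - d_1 c + d_2 c = y^{n_1} - y^{n_1+n_2} + y^{n_2}.\]

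Finally, applying Lemma \ref{lem:stableyr} to each summand (with the understanding that when $n_1 = 0$ the term $y^0$ is the degenerate identity $s_0(e)$, which is critical and hence fixed by $\Phi^\infty$, contributing nothing to the $y$-coefficient), the $y$-coefficient of $\Phi^\infty\partial c$ equals $n_1 - (n_1+n_2) + n_2 = 0$. Therefore $\tilde{\partial}c = 0$ in $\zm_1(\fk,V)$, and the map is trivial. The main obstacle, if there is one, is psychological: the result holds for \emph{every} $2$-simplex without ever enumerating the critical ones, and the entire argument is driven by the rank-one nature of $\fk_1$ together with the monoid-homomorphism property of the face maps.
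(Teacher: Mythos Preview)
Your proof is correct and follows essentially the same approach as the paper: write an arbitrary $2$-simplex as a word in $\alpha_1^{(2)},\alpha_2^{(2)}$, use that the face maps are monoid homomorphisms to compute $\partial c = y^{n_1}-y^{n_1+n_2}+y^{n_2}$, and then apply Lemma~\ref{lem:stableyr} to see the $y$-coefficient is $n_1-(n_1+n_2)+n_2=0$. Your bookkeeping via the counts $n_1,n_2$ is in fact a bit cleaner than the paper's block decomposition $\alpha_1^{i_1}\alpha_2^{i_2}\cdots$, but the argument is the same.
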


\begin{proof} Any critical $2$-simplex of word length $n$ has the form $\sigma=\alpha_1^{i_1}\alpha_2^{i_2}\cdots\alpha_2^{i_{2k}}$, where $i_1$ and $i_{2k}$ could possibly equal $0$, and $\sum i_j = n$. The boundary of this simplex is
$$y^{i_1+i_3+\cdots+i_{2k-1}} - y^n + y^{i_2+i_4+\cdots i_{2k}}.$$
We then compute
\begin{eqnarray*}
\langle\tilde{\partial}\sigma,y\rangle & = & \langle\Phi^\infty\partial\sigma,y\rangle \\
		& = & \langle y^{i_1+i_3+\cdots+i_{2k-1}} - y^n + y^{i_2+i_4+\cdots i_{2k}},y\rangle \\
		& = & \biggl(\sum i_\textrm{odd} + \sum {i_\textrm{even}} - n\biggr) y \\
		& = & 0.
\end{eqnarray*}
\end{proof}

\begin{cor} $H_1(\zm(\fkmath,V))\cong\zz$. \hfill $\qed$
\end{cor}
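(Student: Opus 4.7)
The plan is to assemble three facts already established in the paper: a description of $\zm_1(\fk,V)$, a description of $\zm_0(\fk,V)$, and Proposition \ref{prop:boundary2}. Once these are in place, the corollary drops out by a one-line homology calculation.

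First I would identify $\zm_1(\fk,V)$. The 1-simplices of $\fk$ are exactly the powers $y^r$ with $r\ge 1$. The generator $y$ is critical, as observed at the start of Section \ref{sec:crit}. For every $r\ge 1$, the argument used in the proof of Lemma \ref{lem:stableyr} shows that $\{y^{r+1}\prec \alpha_1^{r}\alpha_2\}$ is a restricted steepness pair, since $\alpha_1^{r}\alpha_2$ is the minimal word-length-$(r+1)$ 2-simplex in the lexicographic order. Hence no $y^r$ with $r\ge 2$ is critical, and $\zm_1(\fk,V)\cong\zz$, freely generated by $y$.

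Next I would dispose of the Morse boundary maps at either end of $\zm_1$. Since $e$ is the unique critical 0-simplex, $\zm_0(\fk,V)\cong\zz\cdot e$, and $\tilde{\partial} y$ is determined by the single incidence number $\langle\tilde{\partial} y,e\rangle$. But $y$ is critical, so $\Phi^\infty y = y$, and both $d_0 y$ and $d_1 y$ equal $e$; hence $\partial y = 0$ and $\tilde{\partial} y = 0$. On the other side, Proposition \ref{prop:boundary2} asserts that $\tilde{\partial}_2 \equiv 0$, so there are no nonzero 1-boundaries. Putting these together,
$$H_1(\zm_\bullet(\fk,V)) \;=\; \ker\tilde{\partial}_1/\operatorname{im}\tilde{\partial}_2 \;\cong\; \zz\cdot y \;\cong\; \zz.$$

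There is no real obstacle at this stage: the substantive work has already been carried out in Lemma \ref{lem:stableyr}, which collapses every $y^r$ with $r\ge 2$, and in Proposition \ref{prop:boundary2}, which kills the only potential 1-boundaries. The corollary is just the final bookkeeping step.
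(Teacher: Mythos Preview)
Your proposal is correct and matches the paper's approach: the corollary is stated with an immediate $\qed$ because the surrounding text has already established that $y$ is the unique critical $1$-simplex (via the pairings $\{y^{r+1}\prec\alpha_1^r\alpha_2\}$ appearing in Lemma~\ref{lem:stableyr}), that $\tilde{\partial}y=0$, and that $\tilde{\partial}_2=0$ by Proposition~\ref{prop:boundary2}. You have simply written out explicitly the bookkeeping the paper leaves implicit.
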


\section{Appendix}\label{sec:apdx}
As promised in the previous section, here is an example of what happens when computing $\Phi(\sigma_6)$. The faces of $\sigma_6$ are
$$\begin{array}{c}
\sigma_5 \\ \alpha_5^2\alpha_4\alpha_3\alpha_2\alpha_1 \\ \alpha_5\alpha_4^2\alpha_3\alpha_2\alpha_1 \\
\alpha_5\alpha_4\alpha_3^2\alpha_2\alpha_1 \\
\alpha_5\alpha_4\alpha_3\alpha_2^2\alpha_1 \\
\alpha_5\alpha_4\alpha_3\alpha_2\alpha_1^2 \\
\sigma_5
\end{array}$$
As noted above, the face $\alpha_5^2\alpha_4\alpha_3\alpha_2\alpha_1$ forms a steepness pair with $\tilde{\sigma}_6$ since it is the face highest in the lexicographic order among all the faces of $\tilde{\sigma}_6$ and all $6$-simplices containing it as a face are higher in the order than $\tilde{\sigma}_6$. Consider the face $\alpha_5\alpha_4\alpha_3^2\alpha_2\alpha_1$. It is a regular face of the simplex $\alpha_6\alpha_5\alpha_3\alpha_4\alpha_2\alpha_1$, for example, but it is in the middle--that is, there are other faces higher in the order and lower in the order and so it cannot form a steepness pair. This holds true for any $6$-simplex having it as a face. This is a consequence of the combinatorial relations among the face and degeneracy maps in $\fk$ and it is a general phenomenon for all the faces of the various $\sigma_{r}$.

\bibliographystyle{plain}

\end{document}